\newtheorem{theorem}{Theorem}[section]
\newtheorem{lemma}{Lemma}[section]
\newtheorem{remark}{Remark}[section]
\newcommand{\bal}{\begin{align}}
\newcommand{\bbal}{\begin{align*}}
\newcommand{\beq}{\begin{equation}}
\newcommand{\eeq}{\end{equation}}
\newcommand{\bca}{\begin{cases}}
\newcommand{\eca}{\end{cases}}
\newcommand{\pa}{\partial}
\newcommand{\fr}{\frac}
\newcommand{\na}{\nabla}
\newcommand{\De}{\Delta}
\newcommand{\cd}{\cdot}
\newcommand{\ep}{\varepsilon}
\newcommand{\dd}{\mathrm{d}}
\newcommand{\ri}{\rightarrow}
\newcommand{\R}{\mathbb{R}}
\newcommand{\les}{\lesssim}
\newcommand{\D}{\mathrm{div}\ }
\newcommand{\f}{\left}
\newcommand{\g}{\right}
\begin{document}
\bibliographystyle{plain}
\title{Anomalous Dissipation for the d-dimensional Navier–Stokes Equations}

\author{Jinlu Li$^{1}$, Yanghai Yu$^{2,}$\footnote{E-mail: lijinlu@gnnu.edu.cn; yuyanghai214@sina.com(Corresponding author); mathzwp2010@163.com} and Weipeng Zhu$^{3}$\\
\small $^1$ School of Mathematics and Computer Sciences, Gannan Normal University, Ganzhou 341000, China\\
\small $^2$ School of Mathematics and Statistics, Anhui Normal University, Wuhu 241002, China\\
\small $^3$ School of Mathematics and Big Data, Foshan University, Foshan, Guangdong 528000, China}

\date{\today}
\maketitle\noindent{\hrulefill}

{\bf Abstract:} The purpose of this paper is to study the vanishing viscosity limit for the d-dimensional Navier--Stokes equations in the whole space:
 \begin{equation*}
\begin{cases}
\pa_tu^\ep+u^\ep\cd\na u^\ep-\ep\Delta u^\ep+\nabla p^\ep=0,\\
\D u^\ep=0.
\end{cases}
\end{equation*}
We aim to presenting a simple rigorous examples of initial data which generates the corresponding solutions of the Navier--Stokes equations do  exhibit anomalous dissipation. Precisely speaking, we show that there are (classical) solutions for which the dissipation rate of the kinetic energy is bounded away from zero.

{\bf Keywords:} Navier--Stokes equations; Inviscid limit; Anomalous dissipation.

{\bf MSC (2010):} 35Q35, 76D09.

\vskip0mm\noindent{\hrulefill}

\section{Introduction}
One of the most important laws in the study of developed turbulence is Kolmogorov's 4/5-law, and in the derivation of this law, a significant assumption is the zeroth law (see Frisch \cite{u} for example). The zeroth law of turbulence states that, in
the limit of zero viscosity, the rate of kinetic energy dissipation for solutions
to the incompressible Navier-Stokes equations is non-vanishing. This is one of the central ansatz of Kolmogorov's 1941 theory \cite{K2}.
In this paper we consider the vanishing viscosity limit for the d-dimensional incompressible Navier–Stokes equations
\begin{equation}\label{1}\tag{NS}
\begin{cases}
\pa_tu^\ep+u^\ep\cd\na u^\ep-\ep\Delta u^\ep+\nabla p^\ep=f^\ep,&\quad (t,x)\in \R^+\times\R^d, \\
\D u^\ep=0,&\quad (t,x)\in \R^+\times\R^d,\\
u^\ep(t=0)=u^{\ep}_0,&\quad x\in \R^d,
\end{cases}
\end{equation}
where $\ep>0$ is the viscosity, $u^\ep: [0,\infty)\times {\mathbb R}^d\rightarrow {\mathbb R}^d$ is the velocity of the fluid, $p^\ep: [0,\infty)\times {\mathbb R}^d\rightarrow {\mathbb R}$ is the pressure and $f^\ep: [0,\infty)\times {\mathbb R}^d\rightarrow{\mathbb R}$ is an external body force. Assuming that the solution is sufficiently smooth and the initial data $u^{\ep}_0$ satisfy a uniform $L^2$ bound (in other words the total kinetic energy at the initial time is bounded).

Since $u^\ep$ is divergence free, taking the dot product of the equation with $u^\ep$ and integrating over $\R^d$,
it is immediate to see that  the energy balance:
\bal\label{0}
\fr12\frac{\dd}{\dd t}\|u^\ep(t)\|^2_{L^2(\R^d)}=-\ep\|\nabla u^\ep(t)\|^2_{L^2(\R^d)}+\int_{\R^d}f^\ep\cdot u^\ep\dd x.
\end{align}
The second term appearing on the right-hand side is the total work of the force $f^\ep$, while
the first term is the {\it energy dissipation rate} due to the viscosity of the fluid. Throughout the
paper, we assume that the external body force $f^\ep=0$. From \eqref{0} we see that the $L^2$ energy decay of solutions is governed by
\bbal
\fr12\|u^\ep(t)\|^2_{L^2(\R^d)}-\fr12\|u_0\|^2_{L^2(\R^d)}=-\ep\int_0^t\|\nabla u^\ep(\tau)\|^2_{L^2(\R^d)}\dd\tau.
\end{align*}
A fundamental postulate of Kolmogorov's 1941 theory of fully developed turbulence
\cite{K1,K2,K3}, called the zeroth law of turbulence, is that the anomalous dissipation of
the kinetic energy holds, namely the following inequality
\bbal\liminf_{\ep\ri 0^+}\ep\int_0^t\|\nabla u^\ep(\tau)\|^2_{L^2(\R^d)}\dd\tau>0
\end{align*}
is valid for finite time $t$.

As for passive scalars, experimental and numerical observations of hydrodynamic
turbulence suggest that, in the limit of vanishing viscosity, the rate of kinetic energy dissipation becomes nonzero \cite{KI,KR1,KR2,PKW}, i.e. there exists $\eta > 0$ independent of $\ep$ such that, in turbulent regimes, a family of Leray-Hopf solutions $\f\{u^\ep\g\}_{\ep>0}$ satisfies
$$\ep\int^T_0 \int_{\R^d}|\na u^{\ep}(t,x)|^2\dd x\dd t\geq\eta>0.$$

This phenomenon of anomalous dissipation is so fundamental to our modern understanding of turbulence that it is often termed the ``zeroth law". Laboratory experiments and numerical simulations of turbulence both confirm the above zeroth law \cite{BV,E,KI,J15}. However, this heuristic phenomenon is difficult to capture mathematically, and only a few rigorous results are known.
Recently, in \cite{DE}, Drivas et.al., investigated the anomalous scalar dissipation for fluid velocities with H\"{o}lder regularity
and gave sufficient conditions for anomalous dissipation in terms of the mixing rates of the advecting flow.  Bru\`{e} and De Lellis \cite{EL} established the anomalous dissipation for the forced Navier-Stokes equations. Jeong and Yoneda \cite{JY1,JY2} considered the anomalous dissipation for the 3D \eqref{1} with zero external force in the framework of $2+\fr12$-dimensional flows. We would like to mention that all the results mentioned above are obtained on the Torus case. To the best of our knowledge, there are no known anomalous dissipation for \eqref{1} in the whole space (even in $\R^2$) where it is rigorously proved. Keeping this classical physical phenomenon in mind, in this paper, we compare the Navier-Stokes flow and the heat flow with the same initial data, and deduce anomalous dissipation in the vanishing viscosity limit of \eqref{1} under some particular conditions on the initial data.
\begin{theorem}\label{th1}
Let $d\geq2$. There exist a sequence of $C^\infty$-smooth initial data $u^{\ep_n}_0$ on $\R^d$ with $\ep_n=2^{-2n}$ satisfying a uniform $L^2$ bound such that there are families of smooth solutions $u^{\ep_n}(t,x)\in C^{\infty}([0,1]\times \R^d)$ to the zero forced \eqref{1} with the property that
\bal\label{ad}
\ep_n\int^1_0 \f\|\na u^{\ep_n}(t,x)\g\|^2_{L^2(\R^d)}\dd t\gtrsim \|u^{\ep_n}_0\|_{L^2(\R^d)}.
\end{align}
In particular, we have
\bal\label{ad1}
\liminf_{n\ri \infty}\ep_n\int^1_0 \f\|\na u^{\ep_n}(t,x)\g\|^2_{L^2(\R^d)}\dd t\geq \eta,
\end{align}
for some absolute constant $\eta>0$.
\end{theorem}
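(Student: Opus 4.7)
My plan is to exploit the heat--flow comparison mentioned in the introduction. The Navier--Stokes energy identity
\[
\tfrac12\|u^{\ep_n}(1)\|_{L^2(\R^d)}^2+\ep_n\int_0^1\|\na u^{\ep_n}(t)\|_{L^2(\R^d)}^2\dd t=\tfrac12\|u_0^{\ep_n}\|_{L^2(\R^d)}^2
\]
reduces \eqref{ad} to showing $\|u^{\ep_n}(1)\|_{L^2}^2\le(1-\eta)\|u_0^{\ep_n}\|_{L^2}^2$ for some absolute $\eta>0$ and all large $n$. I would verify that the companion heat flow $v^{\ep_n}(t):=e^{\ep_n t\De}u_0^{\ep_n}$ already dissipates a definite fraction of the $L^2$ energy on $[0,1]$, while the nonlinear correction $w^{\ep_n}:=u^{\ep_n}-v^{\ep_n}$ is $o(1)$ in $L^2$ at $t=1$; the triangle inequality then produces the required deficit.

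The initial datum I have in mind is smooth, divergence--free, of unit $L^2$ norm, spectrally concentrated in a thin shell around $|\xi|=2^n$, and with small $L^\infty$ norm. A concrete choice is
\[
u_0^{\ep_n}(x)=(-\pa_{x_2}\psi_n,\pa_{x_1}\psi_n,0,\ldots,0),\qquad \psi_n(x)=A_n\,\phi(x/L_n)\sin(2^n x_1),
\]
with $\phi\in\mathcal{S}(\R^d)$ having Fourier transform supported in a small neighbourhood of the origin, $L_n=2^{\beta n}$ for a fixed $\beta>2/d$, and $A_n\sim 2^{-n}L_n^{-d/2}$ calibrated so that $\|u_0^{\ep_n}\|_{L^2}\sim 1$. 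A direct computation then gives $\|u_0^{\ep_n}\|_{L^\infty}\lesssim L_n^{-d/2}$ and $\|\na u_0^{\ep_n}\|_{L^\infty}\lesssim 2^n L_n^{-d/2}$, both vanishing as $n\to\infty$ because $\beta>2/d$. The spectrum of $u_0^{\ep_n}$ lies in $\{|\xi\mp 2^n e_1|\lesssim L_n^{-1}\}$, so $\ep_n|\xi|^2\to 1$ uniformly on it and Plancherel yields
\[
\|v^{\ep_n}(1)\|_{L^2}^2=\int|\widehat{u_0^{\ep_n}}(\xi)|^2 e^{-2\ep_n|\xi|^2}\dd\xi\le(e^{-2}+o(1))\|u_0^{\ep_n}\|_{L^2}^2.
\]

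For the correction, the equation $\pa_t w^{\ep_n}-\ep_n\De w^{\ep_n}+\p(u^{\ep_n}\cd\na u^{\ep_n})=0$ together with the divergence--free cancellations $\int(u^{\ep_n}\cd\na w^{\ep_n})\cd w^{\ep_n}=0$ reduces the standard $L^2$ energy estimate to
\[
\fr{d}{dt}\|w^{\ep_n}\|_{L^2}\le\|v^{\ep_n}\|_{L^\infty}\|\na v^{\ep_n}\|_{L^2}+\|\na v^{\ep_n}\|_{L^\infty}\|w^{\ep_n}\|_{L^2}.
\]
Since the heat semigroup contracts $L^\infty$ and commutes with $\na$, each factor on the right is controlled by the corresponding norm of $u_0^{\ep_n}$, giving $\|v^{\ep_n}\|_{L^\infty}\|\na v^{\ep_n}\|_{L^2}\lesssim 2^n L_n^{-d/2}$ and $\|\na v^{\ep_n}\|_{L^\infty}\lesssim 2^n L_n^{-d/2}$, both $o(1)$. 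Gronwall then gives $\|w^{\ep_n}(1)\|_{L^2}\lesssim 2^n L_n^{-d/2}=o(1)$, and combining with the previous paragraph produces $\|u^{\ep_n}(1)\|_{L^2}\le(e^{-1}+o(1))\|u_0^{\ep_n}\|_{L^2}$, which proves \eqref{ad} and \eqref{ad1}.

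The main obstacle will be controlling the quadratic nonlinearity over the unit time interval: both $\|\na u^{\ep_n}\|_{L^2}$ and the heat--smoothing factor $\ep_n^{-1/2}$ are of order $2^n$, so closeness to the heat flow forces $\|u_0^{\ep_n}\|_{L^\infty}=o(2^{-n})$, which in turn dictates $L_n\gg 2^{2n/d}$. A secondary matter is the assertion $u^{\ep_n}\in C^\infty([0,1]\times\R^d)$: for $d=2$ this is classical global well--posedness, and for $d\ge 3$ it follows from the small--data global existence theory in the critical space $L^d(\R^d)$, since $\|u_0^{\ep_n}\|_{L^d}\lesssim\|u_0^{\ep_n}\|_{L^2}^{2/d}\|u_0^{\ep_n}\|_{L^\infty}^{1-2/d}\lesssim L_n^{-(d-2)/2}\to 0$.
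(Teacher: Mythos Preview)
Your approach is essentially the paper's: compare the Navier--Stokes flow with the heat flow $e^{\ep_n t\De}u_0^{\ep_n}$, choose $u_0^{\ep_n}$ as the curl of a bump modulated by an oscillation at frequency $2^n$ so that its Fourier support sits in a thin shell $\{|\xi|\sim 2^n\}$, and use that $\|\na U\|_{L^\infty}\sim 2^n L_n^{-d/2}\to 0$ to make the nonlinear correction negligible. The paper fixes $L_n=2^{2n}$ (i.e.\ your $\beta=2$) and estimates $\ep_n\int_0^1\|\na u^{\ep_n}\|_{L^2}^2$ directly rather than going through $\|u^{\ep_n}(1)\|_{L^2}$, but the mechanism and the energy estimate for the difference are the same.

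One point to correct: your existence argument for $d\ge 3$ via small data in $L^d$ is stated as if smallness of $\|u_0^{\ep_n}\|_{L^d}$ alone suffices, but the Kato threshold scales with the viscosity, so you actually need $\|u_0^{\ep_n}\|_{L^d}\ll \ep_n=2^{-2n}$, i.e.\ $\beta>4/(d-2)$, not merely $\beta>2/d$. This is harmless since $\beta$ is free, but the cleaner route (and the one the paper takes) is to note that $\|u_0^{\ep_n}\|_{B^1_{\infty,1}}\sim 2^nL_n^{-d/2}\to 0$, whence the viscosity--independent local existence time in $B^1_{\infty,1}$ is $\gtrsim\|u_0^{\ep_n}\|_{B^1_{\infty,1}}^{-1}\to\infty$, giving a smooth solution on $[0,1]$ for all large $n$.
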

Now let us say some words on the key idea of proof of Theorem \ref{th1}. We compare the Navier--Stokes equation and the heat equation with the same initial data, and show that the solution $U$ to the associated heat equation displays anomalous dissipation, i.e.,
\bal\label{y1}
\ep_n\int^1_0\|\na U_n(\tau)\|^2_{L^2(\R^d)}\dd \tau\approx \int^1_0\f\|e^{\ep_n \tau\De}u^{\ep_n}_0\g\|^2_{L^2(\R^d)}\dd \tau>0.
\end{align}
The main difficulty in the proof of Theorem \ref{th1} is to construct the initial data satisfying the following
\begin{itemize}
  \item the $L^2$-norm of initial data is bounded uniformly with respect to the viscosity;
  \item the difference of the Navier-Stokes flow and the heat flow is sufficiently small.
\end{itemize}
To achieve the above two points, taking advantage of the scaling in the whole space, we choose suitably the initial data whose frequency is supported in the annulus $\{\xi:|\xi|^2\thicksim \ep_n^{-1}\}$. This naturally gives that
\bal\label{y2}
\int^1_0\f\|e^{\ep_n \tau\De}u^{\ep_n}_0\g\|^2_{L^2(\R^d)}\dd \tau\thickapprox \f\|u^{\ep_n}_0\g\|^2_{L^2(\R^d)}.
\end{align}
Lastly, we would like to emphasize that we have to cancel the effect of the advection term. The key observation is that, base on our construction of initial data, the advection term $U\cdot \na U$ can be seen as $u^{\ep_n}_0\cdot \na u^{\ep_n}_0$, which is sufficiently small.
\begin{remark}\label{re1}
To the best of our knowledge, Theorem \ref{th1} is the first result on the anomalous dissipation for \eqref{1} in the whole space. Compared with the torus case in \cite{JY1}, we take a different construction on the initial data.
\end{remark}
\begin{remark}\label{re2}
We should mention that we  present a simple deterministic example of initial data, for which anomalous dissipation for \eqref{1}
exists and is a purely diffusive effect. In fact, from \eqref{y1} and \eqref{y2} we find  that viscosity alone is enough to obtain the anomalous dissipation for \eqref{1} in the whole space.
\end{remark}
{\bf Notation.}\; For $X$ a Banach space and $I\subset\R$, we denote by $\mathcal{C}(I;X)$ the set of continuous functions on $I$ with values in $X$. Sometimes we will denote $L^p(0,T;X)$ by $L_T^pX$. The symbol $\mathrm{A}\lesssim (\gtrsim)\mathrm{B}$ means that there is a uniform positive ``harmless" constant $\mathrm{C}$ independent of $\mathrm{A}$ and $\mathrm{B}$ such that $A\leq(\geq) \mathrm{C}B$, and we sometimes use the notation $\mathrm{A}\approx \mathrm{B}$ means that $\mathrm{A}\lesssim \mathrm{B}$ and $\mathrm{B}\lesssim \mathrm{A}$. Let us recall that for all $u\in \mathcal{S}'$, the Fourier transform $\mathcal{F}u$ is defined by
$$
\mathcal{F}u(\xi)=\int_{\R^d}e^{-\mathrm{i}x\cd \xi}u(x)\dd x \quad\text{for any}\; \xi\in\R^d.
$$Choose a radial, non-negative, smooth function $\vartheta:\R^d\mapsto [0,1]$ such that
${\rm{supp}} \;\vartheta\subset B(0, 4/3)$ and $\vartheta(\xi)\equiv1$ for $|\xi|\leq3/4$.
Setting $\varphi(\xi):=\vartheta(\xi/2)-\vartheta(\xi)$, then we deduce that
${\rm{supp}} \;\varphi\subset \left\{\xi\in \R^d: 3/4\leq|\xi|\leq8/3\right\}$ and
 $\varphi(\xi)\equiv 1$ for $4/3\leq |\xi|\leq 3/2$. For more details see \cite{BCD}.
The nonhomogeneous dyadic blocks is defined by
\begin{equation*}
\forall\, u\in \mathcal{S'}(\R^d),\quad \Delta_ju=0,\; \text{if}\; j\leq-2;\quad
\Delta_{-1}u=\vartheta(D)u;\quad
\Delta_ju=\varphi(2^{-j}D)u,\; \; \text{if}\;j\geq0,
\end{equation*}
where the pseudo-differential operator is defined by $\sigma(D):u\to\mathcal{F}^{-1}(\sigma \mathcal{F}u)$.

Finally we recall the definition of the nonhomogeneous Besov spaces
$$
B^{s}_{p,r}:=\f\{f\in \mathcal{S}':\;\|f\|_{B^{s}_{p,r}(\R^d)}:=\left\|2^{js}\|\Delta_jf\|_{L_x^p(\R^d)}\right\|_{\ell^r(j\geq-1)}<\infty\g\}.
$$
\section{Proof of Theorem \ref{th1}}\label{sec3}
We divide the proof of Theorem \ref{th1} into three steps.  Now, by the heat flow, we shall mean the unique $L^2$ solution $U=e^{\ep t\De}u^{\ep}_0(x)$ of
\begin{equation}\label{2}
\begin{cases}
\pa_tU-\ep\Delta U=0,\\
\D U=0,\\
U(0,x)=u^{\ep}_0(x).
\end{cases}
\end{equation}

From \eqref{1} with $f^\ep=0$ and \eqref{2}, we deduce that the difference $v:=u^\ep-U$ satisfies the equation with zero initial data
\begin{align}\label{v}
\partial_tv+v\cd\na v-\ep\De v+\na p^\ep =-v\cd\na U-U\cd\na v-U\cd\na U.
\end{align}

{\bf Step 1:\; Estimation of Difference.}
Multiplying Eq. \eqref{v} by $v$ and  integrating in space, then using the fact that $v$ is divergence and the Cauchy-Schwarz inequality we conclude
\bbal
\|v\|^2_{L^2(\R^d)}+\ep\int^t_0\|\na v\|^2_{L^2(\R^d)}\dd \tau &\leq 2\int^t_0(\|\na U\|_{L^\infty(\R^d)}+1)\|v\|^2_{L^2(\R^d)}\dd \tau+\int^t_0\|U\cd\na U\|^2_{L^2(\R^d)}\dd \tau,
\end{align*}
which implies
\bbal
\|v\|^2_{L^2(\R^d)}+\ep\int^t_0\|\na v\|^2_{L^2(\R^d)}\dd \tau \leq  \exp\f(\int^t_02(\|\na U\|_{L^\infty(\R^d)}+1)\dd \tau\g)\int^t_0\|U\|^2_{L^2(\R^d)} \|\na U\|^2_{L^\infty(\R^d)} \dd \tau.
\end{align*}
Hence, noting that $u^\ep=U+v$, we have
\bal\label{lyz}
\ep\int^t_0\|\na u^\ep\|^2_{L^2(\R^d)}\dd \tau&\geq \ep\int^t_0\|\na U\|^2_{L^2(\R^d)}\dd \tau-\ep\int^t_0\|\na v\|^2_{L^2(\R^d)}\dd \tau
\nonumber\\
&\geq \ep\int^t_0\|\na U\|^2_{L^2(\R^d)}\dd \tau\nonumber\\
&\quad-\exp\f(\int^t_02(\|\na U\|_{L^\infty(\R^d)}+1)\dd \tau\g)\int^t_0\|U\|^2_{L^2(\R^d)} \|\na U\|^2_{L^\infty(\R^d)} \dd \tau.
\end{align}

{\bf Step 2:\; Construction of initial data.} We need to introduce smooth, radial cut-off functions to localize the frequency region. Let $\widehat{\phi}\in \mathcal{C}^\infty_0(\mathbb{R})$ be an even, real-valued and non-negative function on $\R$ and satisfy
\begin{numcases}{\widehat{\phi}(\xi)=}
1, &if\; $|\xi|\leq \frac{1}{4}$,\nonumber\\
0, &if\; $|\xi|\geq \frac{1}{2}$.\nonumber
\end{numcases}
\begin{remark} It is not difficult to see that
$$\|\phi\|_{L^{2}(\R)}\thickapprox1\quad\text{and}\quad\|\phi\|_{L^{\infty}(\R)}\thickapprox1.$$
\end{remark}
From now on, we set $\ep_n:=2^{-2n}$ and
$$f_n(x):=\cos \left(\frac{17}{12}2^nx_1\right)\prod_{i=1}^d\phi\f(\ep_nx_i\g).$$
We define the initial data $u^{\ep_n}_0$ by
\begin{align*}
&u^{\ep_n}_0(x):=2^{-dn-n}
\f(
-\pa_2f_n,\;
\pa_1f_n,\;
0,\;
\cdots,\;
0
\g).
\end{align*}
Then we have the following
\begin{lemma}\label{le1} We have
$$\|u^{\ep_n}_0\|_{L^2(\R^d)}\thickapprox \|\phi\|^d_{L^{2}(\R)}\quad\text{and}\quad\|u^{\ep_n}_0\|_{B^1_{\infty,1}(\R^d)}\thickapprox 2^{(1-d)n}\|\phi\|^d_{L^{\infty}(\R)}.$$
\end{lemma}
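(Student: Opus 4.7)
The plan is to derive both norm estimates by direct computation, using the tensor-product structure of $f_n$ together with the frequency localization of $u^{\ep_n}_0$.

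I would start by computing the partial derivatives of $f_n$. Since $2^n \gg \ep_n = 2^{-2n}$, only $\partial_1$ picks up the full oscillation:
\[
\partial_1 f_n(x) = -\fr{17}{12}\,2^n \sin\f(\fr{17}{12}\, 2^n x_1\g)\prod_{i=1}^d \phi(\ep_n x_i) + \ep_n\cos\f(\fr{17}{12}\, 2^n x_1\g)\phi'(\ep_n x_1)\prod_{i\neq 1}\phi(\ep_n x_i),
\]
whereas $\partial_2 f_n$ contains only a term of magnitude $\ep_n$. After multiplication by $2^{-dn-n}$, the second (vector) component of $u^{\ep_n}_0$, namely $2^{-dn-n}\partial_1 f_n$, has amplitude $\sim 2^{-dn}$ and dominates, while the first component, of size $\sim 2^{-dn-3n}$, can be discarded by the triangle inequality in both target norms.

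For the $L^2$ estimate I would compute $\|2^{-dn-n}\partial_1 f_n\|_{L^2(\R^d)}^2$ by Fubini. After the rescaling $y_i = \ep_n x_i$, each integral in $x_i$ with $i\ge 2$ contributes $\ep_n^{-1}\|\phi\|_{L^2(\R)}^2 = 2^{2n}\|\phi\|_{L^2(\R)}^2$. The $x_1$-integral, after using $\sin^2\alpha = \tfrac12(1-\cos 2\alpha)$, splits into a constant part that rescales to $\tfrac12\cdot 2^{2n}\|\phi\|_{L^2(\R)}^2$ and an oscillatory remainder $\int_\R \cos(\tfrac{17}{6}2^n x_1)\phi^2(\ep_n x_1)\dd x_1$. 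Because $\widehat{\phi}\in C^\infty_c$ forces $\phi^2\in\mathcal{S}(\R)$, the Fourier transform of $\phi^2$ decays faster than any polynomial, which makes this remainder negligible compared with the main term. Assembling everything against the prefactor $2^{-2dn-2n}(\tfrac{17}{12})^2\, 2^{2n}$ yields $\|u^{\ep_n}_0\|_{L^2(\R^d)}^2 \approx \|\phi\|_{L^2(\R)}^{2d}$, as required.

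For the Besov bound I would first establish sharp frequency localization of $u^{\ep_n}_0$. The Fourier transform of $\phi(\ep_n\cdot)$ is supported in $\{|\xi|\le \tfrac12\ep_n = 2^{-2n-1}\}$, so convolving against the two Dirac masses at $\xi_1 = \pm\tfrac{17}{12}2^n$ produced by the cosine confines $\mathrm{supp}\,\widehat{f_n}$ inside the set where $|\xi|\in(\tfrac43\cdot 2^n,\tfrac32\cdot 2^n)$; the frequency $\tfrac{17}{12}$ was chosen precisely to lie strictly inside $(\tfrac43,\tfrac32)$, on which $\varphi(2^{-n}\cdot)\equiv 1$. Consequently $\Delta_n u^{\ep_n}_0 = u^{\ep_n}_0$ and $\Delta_j u^{\ep_n}_0 = 0$ for $j\neq n$, so $\|u^{\ep_n}_0\|_{B^1_{\infty,1}(\R^d)} = 2^n\|u^{\ep_n}_0\|_{L^\infty(\R^d)}$. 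The upper bound $\|u^{\ep_n}_0\|_{L^\infty}\lesssim 2^{-dn}\|\phi\|_{L^\infty}^d$ is immediate from the explicit formula; for the matching lower bound I would pick $x_i^\ast$ with $\phi(\ep_n x_i^\ast)$ close to $\|\phi\|_{L^\infty}$ for $i\ge 2$, and then exploit the fact that the sine oscillates at spatial scale $2^{-n}$ while $\phi(\ep_n x_1)$ varies at the much coarser scale $2^{2n}$, so that within one period of the sine the slow factors barely move and $|\sin|\approx 1$ can be realized simultaneously with $\prod_i\phi(\ep_n x_i)\approx \|\phi\|_{L^\infty}^d$. The only mildly delicate step in the entire argument is controlling the oscillatory $L^2$-remainder, which is dispatched purely by the Schwartz decay of $\widehat{\phi^2}$.
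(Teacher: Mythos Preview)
Your proposal is correct and follows essentially the same route as the paper: both hinge on the frequency localization $\Delta_j u_0^{\ep_n} = \delta_{jn}\, u_0^{\ep_n}$, after which the Besov and $L^2$ estimates reduce to computing $\|\partial_i f_n\|_{L^p}$ by scaling. You actually supply more detail than the paper (which stops at the identity $\|u_0^{\ep_n}\|_{B^\sigma_{p,r}} = 2^{n(\sigma-d-1)}\bigl(\|\partial_1 f_n\|_{L^p}+\|\partial_2 f_n\|_{L^p}\bigr)$), and as a minor sharpening your oscillatory $L^2$-remainder is not merely Schwartz-small but identically zero, since $\widehat{\phi^2}=\widehat{\phi}*\widehat{\phi}$ is supported in $\{|\xi|\le 1\}$.
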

\begin{proof}
Due to the simple fact $\cos\theta=\frac{\mathrm{1}}{2} (e^{-\mathrm{i} \theta}+e^{\mathrm{i} \theta})$, we deduce easily that
\begin{align*}
\mathcal{F}\left(f_n\right)
=\;&2^{2nd-1} \int_{\R}\left(e^{-\mathrm{i} 2^{2 n} x_1\left(\xi_1+\frac{17}{12} 2^{n}\right)}+e^{-\mathrm{i} 2^{2n} x_1\left(\xi_1-\frac{17}{12} 2^{n}\right)}\right) \phi(x_1) \dd x_1\prod_{i=2}^d\widehat{\phi}\f(2^{2n}\xi_i\g)\\
=\;&2^{2nd-1} \left[\widehat{\phi}\left(2^{2n} \left(\xi_1+\frac{17}{12} 2^{n}\right)\right)+\widehat{\phi}\left(2^{2n} \left(\xi_1-\frac{17}{12} 2^{n}\right)\right)\right]\prod_{i=2}^d\widehat{\phi}\f(2^{2n}\xi_i\g),
\end{align*}
which implies that
\bal\label{con}
\mathrm{supp} \ \mathcal{F}\left(u_0^n\right)=\mathrm{supp} \ \mathcal{F}\left(f_n\right)
&\subset \left\{\xi\in\R^d: \ \frac{33}{24}2^{n}\leq |\xi|\leq \frac{35}{24}2^{n}\right\}.
\end{align}
Recalling that $\varphi\equiv 1$ for $\frac43\leq |\xi|\leq \frac32$, we have
\begin{numcases}{\Delta_j(u^{\ep_n}_0)=}
u^{\ep_n}_0, &if $j=n$,\nonumber\\
0, &if $j\neq n$.\nonumber
\end{numcases}
Thus, we deduce that
\bbal
\|u^{\ep_n}_0\|_{B^\sigma_{p,r}(\R^d)}&=2^{n\sigma}\|u^{\ep_n}_0\|_{L^{p}(\R^d)}
= 2^{n(\sigma-d-1)}\f(\|\pa_1f_n\|_{L^{p}(\R^d)}+\|\pa_2f_n\|_{L^{p}(\R^d)}\g),
\end{align*}
which completes the proof of Lemma \ref{le1}.
\end{proof}
{\bf Step 3:\; Completion of the proof.}
Easy computation show that for $t\in[0,T_{\ep_n})$ (for more details see \cite{GZ})
\begin{align*}
\|u^{\ep_n}(t)\|_{B^1_{\infty,1}(\R^d)}&\les\exp\f(C\int_0^t \|u^{\ep_n}(\tau)\|_{B^1_{\infty,1}(\R^d)}\dd\tau\g) \f(\|u^{\ep_n}_0\|_{B^1_{\infty,1}(\R^d)}+\int^t_0\|\nabla p^{\ep_n}(\tau)\|_{B^1_{\infty,1}(\R^d)}\dd \tau\g)
\\&\les\exp\f(C\int_0^t \|u^{\ep_n}(\tau)\|_{B^1_{\infty,1}(\R^d)}\dd\tau\g)\f(\|u^{\ep_n}_0\|_{B^1_{\infty,1}(\R^d)}+\int^t_0\|u^{\ep_n}(\tau)\|^2_{B^1_{\infty,1}(\R^d)}\dd \tau\g).
\end{align*}
Since $\|u^{\ep_n}_0\|_{B^1_{\infty,1}(\R^d)}\leq C2^{(1-d)n}$, by Lemma \ref{le1} and the classical local-wellposdness, we know that there exists $T_{\ep_n}\geq1$ such that \eqref{1} has a unique solution $u^{\ep_n}$ in $\mathcal{C}([0,1];H^\infty)$.

Noticing that $U_n(t,x)=e^{\ep_n t\De}u^{\ep_n}_0(x)$, from Lemma \ref{le1} we obtain that for some uniform positive constant $C$
\bbal
&\|U_n\|_{L^\infty_T(L^2(\R^d))}\leq \|u^{\ep_n}_0\|_{L^2(\R^d)}\les 1
\end{align*}
and
\bbal
&\|\na U_n\|_{L^\infty_T(L^\infty(\R^d))}\leq \|u^{\ep_n}_0\|_{B^1_{\infty,1}(\R^d)}\les 2^{(1-d)n},
\end{align*}
which implies
\bbal
\exp\f(\int^t_02(\|\na U_n\|_{L^\infty(\R^d)}+1)\dd \tau\g)\int^t_0\|U_n\|^2_{L^2(\R^d)} \|\na U_n\|^2_{L^\infty(\R^d)} \dd \tau
\les 2^{2(1-d)n}.
\end{align*}
Then, coming bake to \eqref{lyz}, we conclude
\bal\label{ad0}
\ep_n\int^1_0\|\na u_n\|^2_{L^2(\R^d)}\dd \tau\geq \ep_n\int^1_0\|\na U_n\|^2_{L^2(\R^d)}\dd \tau-C2^{2(1-d)n}.
\end{align}
Due to the support condition \eqref{con} and the Plancherel's identity, it holds that
\bbal
\ep_n\int^1_0\|\na U_n(\tau)\|^2_{L^2(\R^d)}\dd \tau\approx \int^1_0\f\|e^{\ep_n \tau\De}u^{\ep_n}_0\g\|^2_{L^2(\R^d)}\dd \tau\approx \f\|u^{\ep_n}_0\g\|^2_{L^2(\R^d)}\approx \f\|\phi\g\|^{2d}_{L^2(\R)}.
\end{align*}
Inserting the above into \eqref{ad0} gives the anomalous dissipation statements \eqref{ad}-\eqref{ad1}. Thus we complete the proof of Theorem \ref{th1}.
\section*{Acknowledgements}
J. Li is supported by the National Natural Science Foundation of China (11801090 and 12161004) and Jiangxi Provincial Natural Science Foundation (20212BAB211004 and 20224BAB201008). Y. Yu is supported by the National Natural Science Foundation of China (12101011). W. Zhu is supported by the National Natural Science Foundation of China (12201118) and Guangdong Basic and Applied Basic Research Foundation (2021A1515111018).

\section*{Declarations}
\noindent\textbf{Data Availability} No data was used for the research described in the article.

\vspace*{1em}
\noindent\textbf{Conflict of interest}
The authors declare that they have no conflict of interest.

\addcontentsline{toc}{section}{References}

\end{document}